\renewcommand{\Im}{\operatorname{Im}}
\newcommand{\abs}[1]{\left\lvert{#1}\right\rvert}
\newcommand{\norm}[1]{\left\|{#1}\right\|}
\newcommand{\ol}{\overline}
\newcommand{\li}{\hat}
\newcommand{\width}{\operatorname{width}}
\newcommand{\GL}{\mathrm{GL}}
\newcommand{\R}{\mathbb{R}}
\newcommand{\N}{\mathbb{N}}
\newcommand{\Z}{\mathbb{Z}}
\newcommand{\Q}{\mathbb{Q}}
\newcommand{\T}{\mathbb{T}}
\newcommand{\C}{\mathbb{C}}
\DeclareMathOperator{\diam}{diam}
\newcommand{\diff}{\operatorname{Diff}}
\newcommand{\fh}{\mathcal{O}}
\newcommand{\fhc}{\overline{\mathcal{O}}^\infty}
\newcommand{\ie}{i.e.\ }
\newtheorem{theorem}{Theorem}[section]
\newtheorem{corollary}[theorem]{Corollary}
\newtheorem{lemma}[theorem]{Lemma}
\newtheorem{proposition}[theorem]{Proposition}
\newtheorem{claim}{Claim}
\newtheorem*{theorem*}{Theorem}
\theoremstyle{definition}
\newtheorem{definition}[theorem]{Definition}
\theoremstyle{remark}
\newtheorem{remark}[theorem]{Remark}
\begin{document}

\title[A mixing-like property for minimal diffeomorphisms of the 2-torus]{A mixing-like property and inexistence of invariant foliations for minimal diffeomorphisms of the 2-torus}

\author[A. Kocsard]{Alejandro Kocsard}

\address{Universidade Federal Fluminense, Instituto de Matem\'atica, Rua M\'ario Santos Braga S/N, 24020-140 Niteroi, RJ, Brasil}

\email{alejo@impa.br}

\author[A.  Koropecki]{Andr\'es Koropecki}

\address{Universidade Federal Fluminense, Instituto de Matem\'atica, Rua M\'ario Santos Braga S/N, 24020-140 Niteroi, RJ, Brasil}

\email{koro@mat.uff.br}
\thanks{The authors were supported by CNPq-Brasil.}

\date{}

\begin{abstract} 
We consider diffeomorphisms in $\fhc(\T^2)$, the $C^\infty$-closure of the conjugancy class of translations of $\T^2$. By a theorem of Fathi and Herman, a generic diffeomorphism in that space is minimal and uniquely ergodic. We define a new mixing-type property, which takes into account the ``directions'' of mixing, and we prove that generic elements of $\fhc(\T^2)$ satisfy this property. As a consequence, we obtain a residual set of strictly ergodic diffeomorphisms without invariant foliations of any kind. We also obtain an analytic version of these results.
\end{abstract}
\maketitle

\section{Introduction} 

In \cite{fathi-herman}, Fathi and Herman combined generic arguments with the so-called \emph{fast approximation by conjugations} method of Anosov and Katok \cite{anosov-katok}, to study a particular class of diffeomorphisms of a compact manifold: the $C^\infty$-closure of the set of diffeomorphisms which are $C^\infty$ conjugate to elements of a locally free $\T^1$-action on the manifold. They proved that a generic element of that space is minimal and uniquely ergodic (\ie there is a residual subset of such diffeomorphisms), in particular proving that every compact manifold admitting a locally free $\T^1$-action supports a minimal and uniquely ergodic diffeomorphism. 

Surprisingly, the space studied by Fathi and Herman contains many elements with unexpected dynamical properties; for example, a generic diffeomorphism in that space is weak mixing \cite{herman:lagrangian, fayad-sap}, and if the underlying space is $\T^n$, the action of its derivative on the unit tangent bundle is minimal \cite{tesis}. For a very complete survey on the technique of Anosov-Katok and its variations, see \cite{fayad-katok}. In \cite{fayad-sap}, Fayad and Saprikyna use a real analytic version of this method to construct minimal weak mixing diffeomorphisms.

In this short article, we restrict our attention to diffeomorphisms of $\T^2$. In this setting, the closure of maps $C^\infty$ conjugated to elements of any locally free $\T^1$-action coincides with the $C^\infty$-closure of the conjugancy class of the rigid translations $$R_{(\lambda_1,\lambda_2)} \colon (x,y)\mapsto (x+\lambda_1,y+\lambda_2),$$ 
that is, the set $\fhc(\T^2)$, where
$$\fh(\T^2) = \left\{hR_\alpha h^{-1}:h\in \diff^\infty(\T^2),\,\alpha\in\T^2\right\}.$$

As we mentioned above, a generic element of $\fhc(\T^2)$ is topologically weak mixing; however, no topologically mixing elements are known. It is also unknown if a minimal diffeomorphism of $\T^2$ can be topologically mixing. In fact, no examples of minimal $C^\infty$ diffeomorphisms of $\T^2$ in the homotopy class of the identity are known other than the ones in $\fhc(\T^2)$.

Recall that a homeomorphism $f\colon \T^2\to \T^2$ is topologically weak mixing if $f\times f$ is transitive. An equivalent definition is the following: for each open $U\subset \T^2$ and $\epsilon>0$, there is $n>0$ such that $f^n(U)$ is $\epsilon$-dense in $\T^2$. We will be interested in a similar property, which implies weak-mixing but is stronger in that it requires open sets to be mixed in every homological direction.
\begin{definition} A homeomorphism $f\colon \T^2\to \T^2$ is \emph{weak spreading} if for a lift $\li{f}\colon \R^2\to \R^2$ of $f$ the following holds: for each open set $U\in \R^2$, $\epsilon>0$ and $N>0$, there is $n>0$ such that $\li{f}^n(U)$ is $\epsilon$-dense in a ball of radius $N$. 
\end{definition}

Let $\fhc_\mu(\T^2)$ denote the area-preserving elements of $\fhc(\T^2)$. Now we can state our main theorem.

\begin{theorem} \label{theorem:main} Weak spreading diffeomorphisms are generic in $\fhc(\T^2)$ and $\fhc_\mu(\T^2)$.
\end{theorem}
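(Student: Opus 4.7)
The strategy follows the standard Fathi--Herman framework: express weak spreading as a countable intersection of open sets and prove density of each via Anosov--Katok fast approximation by conjugations. Fix a countable basis $\{U_k\}_{k\in\N}$ of open subsets of $\R^2$, a sequence $\epsilon_m\downarrow 0$, and $N_j\uparrow\infty$; for each $(k,m,j)$ set
\[ W_{k,m,j} = \bigcup_{n\geq 1}\bigl\{\, f\in\fhc(\T^2) : \hat{f}^n(U_k)\text{ is }\epsilon_m\text{-dense in some ball of radius }N_j\,\bigr\}. \]
For each fixed $n$ this is a $C^0$-open condition on $f$, hence each $W_{k,m,j}$ is $C^\infty$-open; the weak spreading set is $\bigcap_{k,m,j} W_{k,m,j}$, a $G_\delta$. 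It suffices to prove that each $W_{k,m,j}$ is dense (and this simultaneously in the volume-preserving case).

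To prove density, fix $(k,m,j)$, $f_0\in\fhc(\T^2)$, and a $C^r$-neighborhood $\mathcal V$ of $f_0$. By density of $\fh(\T^2)$, one may assume $f_0=hR_\alpha h^{-1}$ with $\alpha$ irrational. Pick $\alpha'\in\Q^2$ of order $q$ (\ie $q\alpha'\in\Z^2$, minimal) extremely close to $\alpha$, so that $hR_{\alpha'}h^{-1}$ still lies in a chosen sub-neighborhood of $f_0$. The plan is to produce $g\in\diff^\infty(\T^2)$ commuting with $R_{\alpha'}$ (area-preserving in the $\fhc_\mu$ setting), together with an irrational $\alpha''$ extraordinarily close to $\alpha'$, such that $f:=(hg)R_{\alpha''}(hg)^{-1}$ lies in $\mathcal V\cap W_{k,m,j}$. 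The $C^r$-closeness is automatic from the Anosov--Katok estimate: since $q\alpha'\in\Z^2$, the lift $\tilde g$ commutes with $T_{\alpha'}$ on $\R^2$, so $(hg)R_{\alpha'}(hg)^{-1}=hR_{\alpha'}h^{-1}$, and taking $|\alpha''-\alpha'|\ll\|hg\|_{C^{r+1}}^{-1}$ keeps $f$ close to $f_0$. Expanding in $\delta:=\alpha''-\alpha'$,
\[ \hat{f}^n(x) = \hat{f}_*^n(x) + D\tilde h\cdot D\tilde g\cdot n\delta + O\bigl((n\delta)^2\bigr),\qquad \hat{f}_*^n := \tilde h\circ T_{n\alpha'}\circ \tilde h^{-1}, \]
with the derivatives evaluated at suitable $x$-dependent points; since $\hat{f}_*^n(U_k)$ has diameter bounded independently of $n$, all spreading at large scale $N_j$ must come from the correction term.

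This forces the design of $g$: its lift $\tilde g$ must have very large and rapidly varying Jacobian on the bounded region where $x$ ranges, so that as $x$ varies in $U_k$ the correction traces an $\epsilon_m$-net of a $2$-dimensional ball of radius $N_j$. A natural candidate is a composition of two high-frequency, mutually transverse shears supported on thin tubes around individual $R_{\alpha'}$-orbits and extended equivariantly under the $R_{\alpha'}$-action; in the area-preserving case these are taken as time-one maps of compactly supported Hamiltonians in a fundamental domain for $R_{\alpha'}$, equivariantly extended. Choosing $n$ not a multiple of $q$ and tuning $|n\delta|\sim N_j/(L(h)L(g))$ allows the correction to fill a ball of the required radius $\epsilon_m$-densely. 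The principal technical obstacle is that the equivariance requirement forces any perturbation to be replicated $q$ times around $\T^2$, tending to fragment the image of $U_k$ into $q$ pieces rather than one connected $2$-dimensional patch in the lift; resolving this requires taking $q$ very large and aligning the shears' supports and directions with $\alpha'$ so that iteration reassembles the fragments correctly. Achieving honest $\epsilon_m$-density in a $2$-dimensional ball (rather than merely large diameter or $1$-dimensional coverage) forces two transverse shears and exploits the variation of $D\tilde g$ across $U_k$. Balancing the quantitative magnitudes of $q$, $L(g)$, $n$, and $\delta$, together with area-preservation in the $\fhc_\mu$ setting, against the $C^r$-closeness required by fast approximation is the technical crux, parallel in spirit to the estimates of \cite{fathi-herman, fayad-sap}.
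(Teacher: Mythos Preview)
Your outline follows the correct high-level strategy (Anosov--Katok fast approximation, $G_\delta$ decomposition, two transverse shears), but it remains a sketch: the construction of $g$ is only described in words, the quantitative balancing is deferred to ``parallel in spirit to \cite{fathi-herman, fayad-sap}'', and the Taylor-expansion mechanism you invoke is never carried out. By contrast, the paper's proof is fully explicit and avoids the perturbative analysis entirely, via two simplifications you do not use.

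First, the paper decouples the two difficulties you tangle together. Rather than starting from an arbitrary $f_0=hR_\alpha h^{-1}$ and building a $g$ adapted to that particular $h$, the paper isolates a lemma (their Lemma~2.1) with three hypotheses: (1) the $P_n$ are open; (2) the family $\{P_n\}$ is stable under conjugation in the sense that for each $g$ and $m$ there is $N$ with $gP_ng^{-1}\subset P_m$ for $n>N$; (3) for each $n$ and each $p/q$ there is an $h$ commuting with $R_{(1/q,0)}$ such that $hR_{\alpha_k}h^{-1}\in P_n$ along a sequence $\alpha_k\to(p/q,0)$. Condition~(2) is a one-line Lipschitz estimate, and it absorbs the outer conjugation $h$ that in your approach forces you to control $D\tilde h\cdot D\tilde g$ jointly. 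With this decoupling, condition~(3) becomes a clean construction problem with no extraneous $h$ in sight.

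Second, for condition~(3) the paper writes down an explicit $h=v\circ u$ with
\[
\hat u(x,y)=(x,\,y+m\cos 2\pi qx),\qquad \hat v(x,y)=(x+n\cos 2\pi qy,\,y),
\]
global trigonometric shears (not compactly supported perturbations on a fundamental domain). The analysis is then purely geometric and nonlinear: one short interval $I_\delta$ on the $x$-axis is shown to map under $\hat h$ into a tiny ball, while a nearby, shorter interval $J_\delta$ maps to a curve that is $1/n$-dense in $[-n,n]^2$. Since any ball of radius $1/n$ contains a translate of $\hat h(I_\delta)$, its $\hat h$-preimage contains a translate of $I_\delta$; an irrational horizontal rotation eventually carries this over a translate of $J_\delta$, and applying $\hat h$ again gives the desired $1/n$-dense image. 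No derivative expansion, no balancing of $\|D^2\|$ against $\|D\|$, no choice of $n\delta$---the spreading is produced by the full nonlinear map $\hat h$, and works for \emph{every} irrational $\alpha$ at once.

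Your first-order expansion $\hat f^n(x)\approx \hat f_*^n(x)+D\widetilde{hg}\cdot n\delta$ would, if pursued, force exactly the tension you allude to but do not resolve: to make $D\widetilde{hg}\cdot n\delta$ sweep $\epsilon_m$-densely over a ball of radius $N_j$ you need $D\widetilde{hg}$ to vary strongly across the small domain, which makes $\|D^2\widetilde{hg}\|$ large and threatens the $O((n\delta)^2)$ remainder. This is not fatal, but it is real work that your proposal does not do. The paper's nonlinear route sidesteps it completely.
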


As a consequence, we prove a result about invariant foliations announced by Herman in \cite{fathi-herman} without proof.  By a topological foliation we mean a codimension-$1$ foliation of class $C^0$; that is, a partition $\mathcal{F}$ of $\T^2$ into one-dimensional topological sub-manifolds which is locally homeomorphic to the partition of the unit square by horizontal segments. We say that the foliation is invariant by $f$ if $f(F)\in \mathcal{F}$ for every $F\in \mathcal{F}$. We then have:

\begin{corollary}\label{corollary:main}
The set of diffeomorphisms in $\fhc(\T^2)$ (resp. $\fhc_\mu(\T^2)$) without any invariant topological foliation is residual in $\fhc(\T^2)$ (resp. $\fhc_\mu(\T^2)$). 
\end{corollary}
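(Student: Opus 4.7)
The plan is to derive Corollary~\ref{corollary:main} directly from Theorem~\ref{theorem:main}: intersecting the residual set of weak-spreading diffeomorphisms with the Fathi--Herman residual set of minimal diffeomorphisms, it suffices to show that every weak-spreading minimal $f \in \fhc(\T^2)$ fails to admit an invariant codimension-one $C^0$ foliation.

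Assume for contradiction that such an $f$ preserves a $C^0$ foliation $\mathcal{F}$. The classical structure theory of codimension-one $C^0$ foliations of $\T^2$ provides a common asymptotic direction $v \in \R P^1$ and a constant $C > 0$ such that each leaf of the lifted foliation $\til{\mathcal{F}}$ on $\R^2$ lies in the $C$-neighborhood of some line parallel to $v$; combined with minimality of $f$, this further forces either every leaf of $\mathcal{F}$ to be dense in $\T^2$ (irrational case) or every leaf to be a compact circle (rational case). Fix a lift $\hat{f}$ of $f$; since $\fhc(\T^2)$ lies in the identity component of $\diff^\infty(\T^2)$, $\hat{f}$ commutes with integer translations.

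The heart of the argument is the following uniform strip bound: for every bounded open $U \subset \R^2$ there is a constant $D = D(U, \mathcal{F})$ such that $\hat{f}^n(U)$ is contained in a strip of width $D$ in direction $v$, for every $n \in \Z$. Granting this, apply weak spreading with $\epsilon = 1/2$ and any $N > D + 1$: the hypothesis yields some $n$ with $\hat{f}^n(U)$ being $\epsilon$-dense in a ball of radius $N$, whereas the strip bound confines $\hat{f}^n(U)$ to the $\epsilon$-neighborhood of a strip of width $D$, a set whose $v^\perp$-diameter is only $D + 2\epsilon < 2N$---a contradiction.

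To establish the strip bound, I would analyze the action $\hat{f}_*$ induced by $\hat{f}$ on the leaf space $\mathcal{L}$ of $\til{\mathcal{F}}$; because $\hat{f}$ commutes with $\Z^2$-translations, $\hat{f}_*$ commutes with the induced $\Z^2$-action on $\mathcal{L}$. In the irrational case, $\mathcal{L}$ is homeomorphic to $\R$ and the $\Z^2$-action is minimal, hence semi-conjugate via a continuous order-preserving surjection $p \colon \mathcal{L} \to \R$ to an action by a dense subgroup of translations of $\R$; the commutation relation then forces $p \circ \hat{f}_* = T_d \circ p$ for some $d \in \R$. Since the leaf-height function $H \colon \mathcal{L} \to \R$ (say, the midpoint of the bounded interval $\pi_{v^\perp}(L)$) is also $\Z^2$-equivariant for the same translation action, the difference $H - p$ is a continuous $\Z^2$-invariant function on $\mathcal{L}$, hence constant by minimality; thus the heights of the leaves meeting $\hat{f}^n(U)$ form an interval of length independent of $n$, and together with the leaf width $C$ this gives the strip bound. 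In the rational case, $\hat{f}_*$ is a lift of a minimal circle homeomorphism on $\mathcal{L}/\Z \cong S^1$, whose iterates displace points within a uniformly bounded distance of a linear translation, again yielding the strip bound. The main technical obstacle is this leaf-space analysis, especially the semi-conjugacy step in the irrational case, where one must accommodate the possibly non-trivial wandering behavior of $C^0$ foliations on the leaf space.
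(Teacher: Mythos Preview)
Your outline is plausible and can be pushed through, but it is substantially more involved than what the paper does, and it proves a weaker statement (you need minimality of $f$, which you borrow from Fathi--Herman; the paper shows that weak spreading alone, for $f$ homotopic to the identity, already rules out invariant foliations).

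The paper avoids the leaf-space analysis entirely. Instead of studying the induced action on $\mathcal{L}$ and building a semi-conjugacy, it uses a single leaf $\hat{F}_0$ of the lifted foliation that lies in a strip between two parallel lines and separates them (this is Proposition~\ref{pro:folia}). The integer translates $\hat{F}_0+(0,k)$ are again leaves, they cover $\R^2$ by successive strips of uniformly bounded $u$-width (where $u$ is normal to the strip), and the key observation is purely topological: since $\hat{f}^n(\hat{F}_0)$ is a leaf, it cannot cross any $\hat{F}_0+(0,k)$, so it is trapped in one of these strips. Consequently the region $S$ between $\hat{F}_0$ and $\hat{F}_0+(0,1)$ satisfies $\operatorname{width}_u(\hat{f}^n(S))\le 2M$ for all $n$, and weak spreading (applied with $N=3M$, $\epsilon=1/3$) gives an immediate contradiction. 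No minimality, no dichotomy into rational/irrational slopes, no rotation-number or semi-conjugacy argument is needed.

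What your route buys is a more detailed dynamical picture of the action on the leaf space; what it costs is exactly the technical obstacle you flag at the end (establishing the semi-conjugacy and matching the equivariance cocycles of $H$ and $p$), together with the unnecessary appeal to minimality. If you replace your leaf-space argument by the ``leaves-as-barriers'' observation above, the proof collapses to a few lines.
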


Since the set of minimal and uniquely ergodic diffeomorphisms in $\fhc(\T^2)$ (or $\fhc_\mu(\T^2)$) is also residual, this provides a residual set of minimal, uniquely ergodic diffeomorphisms with no invariant foliations.

Using the ideas of \cite{fayad-sap}, it is possible to construct real analytic examples, working with diffeomorphisms which have an analytic extension to a band of fixed width in $\C^2$ (see the precise definitions in \S\ref{sec:analytic}).

\begin{theorem}\label{theorem:anal} The set of real analytic diffeomorphisms of $\T^2$ which are weak spreading is residual in $\ol{\mathcal{O}}^\omega_\rho(\T^2)$.
\end{theorem}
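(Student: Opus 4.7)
The plan is to mirror the proof of Theorem~\ref{theorem:main}, substituting the analytic version of the Anosov--Katok conjugation method developed by Fayad and Saprikyna in \cite{fayad-sap} for the $C^\infty$ construction.

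First, I would verify that weak spreading is a $G_\delta$ property in $\ol{\mathcal{O}}^\omega_\rho(\T^2)$. Fixing a countable base $\{U_k\}$ of open sets of $\R^2$, the weak-spreading diffeomorphisms form the intersection
$$\bigcap_{k\in\N}\bigcap_{\epsilon\in\Q_{>0}}\bigcap_{N\in\N}\bigcup_{n\geq 1}\left\{f:\li{f}^n(U_k)\text{ is }\epsilon\text{-dense in some ball of radius }N\right\}.$$
Each inner set is open in the $C^0$ topology (a standard argument: replace $U_k$ by a compact subset on which $\li{f}^n$ is already $\epsilon'$-dense with $\epsilon'<\epsilon$, and invoke continuous dependence of iterates), hence also in the finer $\rho$-analytic topology of $\ol{\mathcal{O}}^\omega_\rho(\T^2)$.

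Next I would prove density. Since analytic conjugates of rational translations form a dense subset of $\ol{\mathcal{O}}^\omega_\rho(\T^2)$ by the definition of this space, it suffices to approximate any element of the form $HR_\alpha H^{-1}$ (with $\alpha\in\Q^2$ and $H$ analytic on a band wider than $\rho$) by weak-spreading diffeomorphisms. Following \cite{fayad-sap}, one builds inductively a sequence $f_n=H_nR_{\alpha_{n+1}}H_n^{-1}$, where $\alpha_n=p_n/q_n\in\Q^2$, $H_n=h_1\circ\cdots\circ h_n$, and each $h_n$ is real analytic, commutes with $R_{\alpha_n}$, and extends analytically to a band of width $\rho_n$ chosen to decrease to $\rho$. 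Choosing $\alpha_{n+1}$ close enough to $\alpha_n$ (depending on the analytic norm of $H_n$) forces $f_n\to f_\infty$ in $\ol{\mathcal{O}}^\omega_\rho(\T^2)$, with $f_\infty$ arbitrarily close to the starting diffeomorphism. The central step is to choose each $h_{n+1}$ so that $f_{n+1}$ already realises a finite version of weak spreading for a prescribed list $\{(U_{k_j},\epsilon_j,N_j)\}$: on a single fundamental domain for $R_{\alpha_{n+1}}$, $h_{n+1}$ stretches a tiny piece of $H_n^{-1}(U_{k_j})$ into a long thin arc, and iteration by $R_{\alpha_{n+1}}$ then distributes this arc $\epsilon_j$-densely throughout a ball of radius $N_j$. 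An analytic model for such spreading commuting diffeomorphisms can be built as in \cite{fayad-sap}, by suitably averaging a truncated analytic shear over the $R_{\alpha_{n+1}}$-action.

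The main obstacle is the quantitative analytic control. Unlike in the $C^\infty$ case, the norm of $h_{n+1}$ on the band of width $\rho_{n+1}$ cannot be chosen independently of the denominator $q_{n+1}$ required to achieve the prescribed spreading, and the sequence $\rho_n\searrow\rho$ must be arranged so that the entire construction still converges in the $\rho$-band rather than losing analyticity in the limit. Balancing the growth of $\|h_{n+1}\|_{\rho_{n+1}}$, the denominator $q_{n+1}$, and the closeness $|\alpha_{n+2}-\alpha_{n+1}|$ is precisely the delicate estimate performed in \cite{fayad-sap}; once their approximation lemmas are invoked, the combinatorial spreading argument from the proof of Theorem~\ref{theorem:main} transfers directly. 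A standard diagonal enumeration over $(U_k,\epsilon,N)$ then yields a dense $G_\delta$ of weak-spreading diffeomorphisms, proving the theorem.
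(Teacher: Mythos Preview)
Your outline would most likely go through, but it takes a far more laborious route than the paper does. The paper's entire argument for the analytic case is essentially a one-line observation: the specific conjugating diffeomorphism $h$ built in \S3 is a composition of the cosine shears
\[
\li{u}(x,y)=\bigl(x,\; y+m\cos(2\pi qx)\bigr),\qquad \li{v}(x,y)=\bigl(x+n\cos(2\pi qy),\;y\bigr),
\]
each of which is entire and bi-holomorphic on all of $\C^2$. Hence $h$ already belongs to the class of conjugating maps used to define $\ol{\mathcal{O}}_\rho^\omega(\T^2)$, and Lemma~\ref{lem:fh} together with the verification of conditions (1)--(3) carried out in \S3 transfers verbatim to the $d_\rho$-topology. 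No inductive tower $H_n=h_1\circ\cdots\circ h_n$, no shrinking sequence of bands $\rho_n\searrow\rho$, and none of the delicate norm-balancing estimates from \cite{fayad-sap} are needed.

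Your approach, by contrast, sets up the full Anosov--Katok limiting scheme with analytic control at each stage. That machinery is exactly what one would need if the conjugating maps produced in \S3 were only $C^\infty$; here they are already entire, so the ``main obstacle'' you identify simply never arises. Your proof buys no additional conclusion and pays the price of the quantitative bookkeeping. One further wrinkle worth flagging: in your scheme the $h_n$ are only assumed to extend to a band of width $\rho_n$, whereas the definition of $\ol{\mathcal{O}}_\rho^\omega(\T^2)$ in the paper requires conjugating maps whose lifts extend bi-holomorphically to all of $\C^2$; you would have to argue separately that your intermediate conjugates (or their limit) still lie in that space.
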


\begin{remark}
We use the word ``weak'' in the definition of weak spreading because there is an analogy with the topological weak mixing property. We could also define \emph{strong} spreading (or just \emph{spreading}) as the property that for any open set $U\subset \R^2$, $\epsilon>0$ and $N>0$ there is $n_0$ such that that $\li{f}^n(U)$ is $\epsilon$-dense in a ball of radius $N$ whenever $n>n_0$. This would be in analogy with the definition of topological mixing, but it is clearly a stronger property. In fact, the typical examples of topologically mixing systems in $\T^2$ mix only in one direction (e.g. Anosov systems and time-one maps of some minimal flows \cite{fayad}). It is not obvious that strong spreading diffeomorphisms exist; however, as P. Boyland kindly explained to us, an example of a strong spreading diffeomorphism can be constructed using Markov partitions and the techniques of \cite{boyland}.
\end{remark}

\subsection{Acknowledgments}
We are grateful to E. Pujals and P. Boyland for useful discussions, and the anonymous referee for bringing the results of \cite{fayad-sap} to our attention and suggesting various improvements, in particular the content of $\S\ref{sec:analytic}$.

\section{The method of Fathi-Herman}

As usual, we identify $\T^2\simeq \R^2/\Z^2$ with quotient projection $\pi\colon \R^2\to \T^2$, and denote by $\diff^\infty(\T^2)$ the space of $C^\infty$ diffeomorphisms of $\T^2$. A lift of one such diffeomorphism $f$ to $\R^2$ is a map $\li{f}\colon \R^2\to \R^2$ such that $f\pi = \pi\li{f}$. If $f$ is homotopic to the identity, this is equivalent to saying that $\li{f}$ commutes with integer translations, \ie $\li{f}(z+v) = \li{f}(z)+v$ for $v\in \Z^2$. Two different lifts of a diffeomorphism of $\T^2$ always differ by a constant $v\in \Z^2$. We will denote by $\li{R}_\alpha$ the translation $z\mapsto z+\alpha$ of $\R^2$ and by $R_\alpha$ the rotation of $\T^2$ lifted by $\li{R}_\alpha$.

The method used in \cite{fathi-herman}, adapted to our case, can be resumed as follows:

\begin{lemma}\label{lem:fh}
Let $P\subset \fhc(\T^2)$ (or $\fhc_\mu(\T^2)$) be such that
\begin{enumerate}
	\item $P=\bigcap_{n\geq 0} P_n$, where the $P_n$ are open; 
	\item For each $g\in \diff^\infty(\T^2)$ (resp. $\diff^\infty_\mu(\T^2)$) and $m\in \N$, there is $N>0$ such that $\{gfg^{-1} : f\in P_n\} \subset P_m$ whenever $n>N$;
	\item For each $n\in \N$, $p/q\in \Q$, there exists $h\in\diff^\infty(\T^2)$ (resp. $\diff^\infty_\mu(\T^2)$) such that
		\begin{itemize} 
			\item $hR_{(1/q,0)} = R_{(1/q,0)}h$;
			\item $hR_{\alpha_k} h^{-1}\in P_n$ for some sequence $\alpha_k\to (p/q,0)$.
		\end{itemize}
\end{enumerate}
Then, $P$ is residual in $\fhc(\T^2)$ (resp. $\fhc_\mu(\T^2)$). 
\end{lemma}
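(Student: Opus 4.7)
The plan is a standard Baire category argument. Since $\fhc(\T^2)$ is closed in the Polish space $\diff^\infty(\T^2)$ (resp.\ $\diff^\infty_\mu(\T^2)$), it is itself a Baire space, so by condition (1) it suffices to show that each $P_m$ is dense in $\fhc(\T^2)$.

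To prove density of $P_m$, observe that $\fh(\T^2)$ is dense in $\fhc(\T^2)$ by definition, so it is enough to check that every element $gR_\beta g^{-1} \in \fh(\T^2)$ lies in $\ol{P_m}$. Applying condition (2) to $g$ produces $N$ with $g P_n g^{-1} \subset P_m$ for all $n > N$, and since conjugation by $g$ is a homeomorphism of $\diff^\infty(\T^2)$ we also have $g\ol{P_n}g^{-1} \subset \ol{P_m}$. The problem therefore reduces to showing that $R_\beta \in \ol{P_n}$ for every $\beta \in \T^2$ and every $n \in \N$.

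I would accomplish this in two stages. First, for any rational $p/q$ and any $n$, condition (3) produces $h \in \diff^\infty(\T^2)$ commuting with $R_{(1/q,0)}$, hence also with $R_{(p/q,0)} = R_{(1/q,0)}^p$, and a sequence $\alpha_k \to (p/q,0)$ with $hR_{\alpha_k}h^{-1} \in P_n$. Since $hR_{\alpha_k}h^{-1} \to hR_{(p/q,0)}h^{-1} = R_{(p/q,0)}$ in $C^\infty$, this yields $R_{(p/q,0)} \in \ol{P_n}$. Second, I would extend to all rational rotations by conjugating with elements of $\GL(2,\Z) \subset \diff^\infty_\mu(\T^2)$: for $A \in \GL(2,\Z)$ one has $A R_{(p/q,0)} A^{-1} = R_{(p/q)\,Ae_1}$, and every primitive integer vector arises as $Ae_1$ for some such $A$, so as $A$ and $p/q$ vary, $(p/q)\,Ae_1$ exhausts every rational point of $\T^2$. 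Combining with condition (2) applied to $A$, we deduce $R_\beta \in \ol{P_n}$ for every rational $\beta$, and then $R_\alpha \in \ol{P_n}$ for every $\alpha \in \T^2$ by $C^\infty$-density of rational rotations and closedness of $\ol{P_n}$.

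The main obstacle is the second stage: condition (3) only provides approximations near rational rotations on the $x$-axis, and extracting all rational rotations requires the $\GL(2,\Z)$-conjugation trick above. This trick relies on the fact that every primitive integer vector extends to a $\Z$-basis of $\Z^2$, which ensures that horizontal rational translations can be moved to arbitrary rational translations of $\T^2$ via smooth, volume-preserving conjugations, so that the one-parameter family of approximations furnished by (3) is in fact sufficient.
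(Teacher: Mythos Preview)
Your argument is correct and follows essentially the same route as the paper: use (3) and the commutation $hR_{(1/q,0)}=R_{(1/q,0)}h$ to place each $R_{(p/q,0)}$ in $\ol{P_n}$, then use (2) to push this through conjugations and conclude density of each $P_m$. The paper is slightly more economical in that it shows $gR_{(p/q,0)}g^{-1}\in\ol{P_m}$ in one step (absorbing your $\GL(2,\Z)$ matrix into the arbitrary conjugacy $g$) and then simply asserts that $\{gR_{(p/q,0)}g^{-1}:g\in\diff^\infty(\T^2),\,p/q\in\Q\}$ is dense in $\fhc(\T^2)$; your ``second stage'' with the $\GL(2,\Z)$ trick is exactly the justification of that density claim, which the paper leaves implicit.
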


\begin{proof}
Given $m\in \N$, $p/q\in \Q$, and $g\in \diff^\infty(\T^2)$, let $n$ be as in (2), and then $h$ and $\{\alpha_k\}$ as in (3). Then 
	$$P_n \ni hR_{\alpha_k}h^{-1}\xrightarrow[k\to\infty]{C^\infty} hR_{(p/q,0)}h^{-1}=R_{(p/q,0)},$$
so that
$$P_m\ni g(hR_{\alpha_k}h^{-1})g^{-1}\xrightarrow[k\to\infty]{C^\infty}  gR_{(p/q,0)}g^{-1}.$$
This proves that $gR_{(p/q,0)}g^{-1}\in \ol{P}_m^\infty$. Since this holds for all $g$ and $p/q$, it follows that $P_m$ is dense in $\fhc(\T^2)$ because so is the set
			$$\left\{hR_{(p/q,0)}h^{-1} : h\in \diff^\infty(\T^2),\, p/q\in \Q\right\}.$$
Since this holds for all $m$ and each $P_m$ is open, this proves that $P$ is residual in $\fhc(\T^2)$. The proof in the area-preserving case is the same.
\end{proof}

The property of having no invariant topological foliations is hard to deal with in the $C^\infty$ topology in order to apply the above lemma. However, the weak spreading property can be adequately described as an intersection of countably many properties that fit well into the lemma; thus we first prove Theorem \ref{theorem:main} using the above method, and then we prove that weak spreading is not compatible with the existence of invariant foliations of any kind, which implies  Corollary \ref{corollary:main}.

\section{Proof of Theorem \ref{theorem:main}}

Let $P_n$ denote the set of all $f\in \fhc(\T^2)$ such that if $\li{f}$ is a lift of $f$, for any ball $B$ of radius $1/n$ in $\R^2$, there is $k>0$ such that $\li{f}^k(B)$ is $1/n$-dense in a ball of radius $n$. Note that if this property holds for some lift, it holds for any lift of $f$. 

It is clear that $P=\cap P_n$ is the set of weak spreading elements of $\fhc(\T^2)$. 
Denote by $B(z,\epsilon)$ the ball of radius $\epsilon$ centered at $z$. Given a lift $\li{f}$ of some $f\in P_n$, and $z\in \R^2$, let $k_z$ be the smallest positive integer such that $\li{f}^{k_z}(B(z,1/n))$ is $1/n$-dense in a ball of radius $n$. By continuity of $\li{f}$, the map $z\mapsto k_z$ is upper semi-continuous, and therefore it attains a maximum $K$ when $z\in [0,1]^2$. But since $\li{f}$ lifts a map homotopic to the identity, $k_z = k_{z+v}$ when $v\in \Z^2$, so that $k_z\leq K$ for all $z\in \R^2$. Hence, if $g$ is close enough to $f$ in the $C^0$ topology and $\li{g}$ is the lift of $g$ closest to $\hat{f}$, it also holds that $\li{g}^{k_z}(B(z,1/n))$ is dense in a ball of radius $n$ for any $z\in \R^2$. Hence $P_n$ is open in the $C^0$ topology (and, in particular, in the $C^\infty$ topology).

To see that condition (2) of Lemma \ref{lem:fh} holds, note that any lift $\li{g}$ of a diffeomorphism $g$ of $\T^2$ is bi-Lipschitz. Fix $m\in \N$, let $C$ be a Lipschitz constant for $\li{g}$ and $\li{g}^{-1}$, and let $n>0$ be such that $C<n/m$. If $\li{f}$ is a lift of $f\in P_n$, and if $U$ is an open set, then there is $k$ such that $\li{f}^k(\li{g}^{-1}(U))$ is $1/n$-dense in a ball of radius $n$. Thus, $\li{g}\li{f}^k\li{g}^{-1}(U)$ is $C/n$-dense in a ball of radius $n/C$, which implies that $gfg^{-1}\in P_m$ as required.

To finish the proof, it remains to see that condition (3) of  Lemma \ref{lem:fh} holds. To do this, it suffices to construct, for each $q,n\in \N$, a diffeomorphism $h\in \diff^\infty(\T^2)$ which commutes with $R_{(1/q,0)}$ and such that $hR_{(\alpha,0)}h^{-1}\in P_n$ whenever $\alpha$ is irrational. Note that it is enough to prove this for some multiple of $q$ instead of $q$. We will assume that $q$ is a multiple of $n$ and $q\geq 2n$, since otherwise we may use $2qn$ instead of $q$. We define $h$ by constructing a lift $\li{h}=\li{v}\circ\li{u}$, where  $\li{v}, \li{u}\colon \R^2\to\R^2$ are the maps
$$\li{u}(x,y)=\left(x,\, y + m\cos(2\pi q x)\right),\quad \li{v}(x,y)=\left(x+n\cos(2\pi qy),y\right)$$ 
and $m$ is a sufficiently large integer that we will choose later.
It is clear that $\li{u}$ and $\li{v}$ are lifts of $C^\infty$ torus diffeomorphisms in the homotopy class of the identity, because they commute with integer translations. They also commute with $\li{R}_{(1/q,0)}$ and $\li{R}_{(0,1/q)}$ as well. The same properties hold for $\li{h}=\li{v}\circ\li{u}$. Moreover, since both $\li{u}$ and $\li{v}$ are area-preserving, so is $\li{h}$ and the rest of the proof also works in the area-preserving setting.

\begin{figure}[t]
  \centering 
   \resizebox{7cm}{!}{\includegraphics{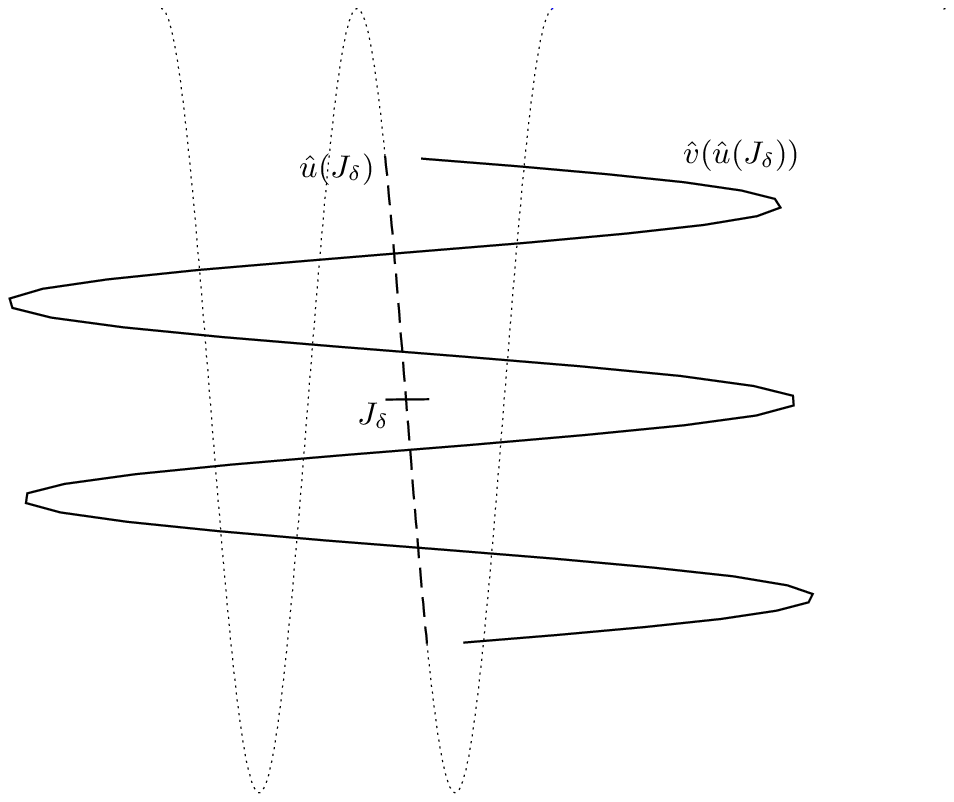}}\\ 
  \caption{Image of $J_\delta$ by $\li{h}$}
  \label{fig1}
\end{figure}

Let $\delta = 2n(\pi q m)^{-1}$, $I_\delta = [-\delta, \delta]\times \{0\}$, and $J_\delta = [(4q)^{-1}-\delta/2, (4q)^{-1}+\delta/2]\times \{0\}$. 
\begin{claim} If $m$ is large enough, then $\li{h}(I_\delta)$ is contained in the ball of radius $1/(2n)$ centered at $(n,m)$, and $\li{h}(J_\delta)$ is $1/n$-dense in $[-n,n]\times[-n,n]$.
\end{claim}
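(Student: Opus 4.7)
The plan is to compute $\li{h}$ explicitly on the $x$-axis and control the result via Taylor expansion. Since $\li{u}(x, 0) = (x, m\cos(2\pi q x))$, one has
\[
\li{h}(x,0) = \bigl( x + n\cos(2\pi q m \cos(2\pi q x)),\; m\cos(2\pi q x) \bigr).
\]
The crucial algebraic fact used throughout is that $qm \in \Z$, so $\cos(2\pi q m) = 1$; and the scale $\delta = 2n/(\pi q m)$ is chosen precisely so that the relevant cosine arguments span ranges of controlled size as $m \to \infty$.

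For the first assertion, $x \in I_\delta$ gives $2\pi q x = O(n/m)$, hence $\cos(2\pi q x) = 1 - O(n^2/m^2)$ and $m\cos(2\pi q x) = m - O(n^2/m)$. Exploiting $qm \in \Z$,
\[
\cos\bigl(2\pi q m \cos(2\pi q x)\bigr) = \cos\bigl(O(qn^2/m)\bigr) = 1 - O\bigl(q^2 n^4/m^2\bigr).
\]
Therefore $\li{h}(x,0) \to (n, m)$ uniformly on $I_\delta$ as $m \to \infty$, so the ball condition holds for $m$ large enough (depending on $n$ and $q$).

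For the second assertion, I would parameterize $J_\delta$ by $x = 1/(4q) + s$ with $s \in [-\delta/2, \delta/2]$, giving $\cos(2\pi q x) = -\sin(2\pi q s)$, which is strictly decreasing. Then $y(s) := -m\sin(2\pi q s)$ is a homeomorphism from $[-\delta/2, \delta/2]$ onto $[-m\sin(2n/m), m\sin(2n/m)]$, a range whose length tends to $4n$; let $s(y)$ denote its inverse, noting $|s(y)| \leq \delta/2 \to 0$ and $|ds/dy| = O(1/(qm))$. Thus $\li{h}(J_\delta)$ is the curve
\[
\Gamma = \bigl\{ (\tfrac{1}{4q} + s(y) + n\cos(2\pi q y),\; y) \,:\, y \in [-m\sin(2n/m), m\sin(2n/m)]\bigr\}.
\]
To show $\Gamma$ is $1/n$-dense in $[-n,n]^2$, fix $(a,b) \in [-n,n]^2$ and pick a window $W$ of length $1/q \leq 1/(2n)$ (using $q \geq 2n$) containing $b$, shifted if necessary to lie inside the $y$-domain. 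On $W$, one full period of $\cos(2\pi q y)$ occurs, so $n\cos(2\pi q y)$ attains every value in $[-n, n]$, while $s(y)$ varies by only $O(1/(q^2 m))$. Consequently, the $x$-coordinate of $\Gamma$ restricted to $W$ sweeps through an interval of length $\approx 2n$ whose center lies within $1/(8n) + o(1)$ of the origin, and hence comes within $1/(8n) + o(1)$ of every point of $[-n, n]$. Combined with $|y^* - b| \leq 1/q \leq 1/(2n)$, some point of $\Gamma$ lies within Euclidean distance less than $1/n$ of $(a,b)$ for $m$ sufficiently large.

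The only subtlety I anticipate is the boundary case $b \approx \pm n$, where the window $W$ must be shifted inward to stay inside the $y$-domain; this is routine since that domain tends to $[-2n, 2n]$, which properly contains $[-n, n]$ for $m$ large. All the remaining steps reduce to straightforward Taylor estimates in which the parameter $\delta$ was calibrated for precisely this purpose.
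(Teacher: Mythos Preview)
Your argument is correct and follows essentially the same route as the paper's: both exploit that $m\in\Z$ (so $\cos(2\pi q m)=1$) together with the Taylor bound $1-\cos x = O(x^2)$ for the first assertion, and both prove the second by showing that $\li{u}(J_\delta)$ spans the full vertical range $[-n,n]$ while staying in a thin vertical strip, after which the horizontal oscillation $n\cos(2\pi q y)$ of $\li v$ fills the square $1/q$-densely. The only difference is organizational---the paper compares $\li u(J_\delta)$ to a reference vertical segment $L=\{0\}\times[-n,n]$ and uses $\li v(L)$ as a proxy, whereas you parameterize $\Gamma=\li h(J_\delta)$ directly by $y$ via the inverse $s(y)$ and argue density pointwise---but the underlying estimates are identical.
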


\begin{proof}
First observe that from the inequality
$$1-\cos(x) \leq x^2/2 \quad \forall x\in \R$$
it follows that (denoting by $(x_1,x_2)_i$ the coordinate $x_i$)
$$\abs{(\li{u}(x,0) - \li{u}(0,0))_2} \leq 2m(\pi q x)^2 < 2m(\pi q \delta)^2 = 8n^2/m$$
if $\abs{x}<\delta$. Since $\li{u}(0,0)=(0,m)$, this means that $\li{u}(I_\delta)$ is contained in the rectangle $[-\delta,\delta]\times [m-b, m+b]$ where $b=8n^2/m$. By the definition of $\li{v}$ and a similar argument (since $m$ is an integer), we can conclude that $\li{v}(\li{u}(I_\delta))\subset [n-a,n+a]\times[m-b,m+b]$, where
$$a=\delta + 2n(\pi qb)^2 = 2n(\pi q m)^{-1} + 128\pi^2q^2n^5m^{-2}.$$
Since both $a$ and $b$ can be made arbitrarily small if $m$ is large enough, $\li{h}(I_\delta)$ is contained in a ball around $(n,m)$ of radius $1/(2n)$ if $m$ 
is large enough.

For the second part of the claim, note that
$$\cos(x+\pi/2) = \sin(x) \geq x/2 \quad \text{ if }0\leq x\leq \pi/2$$
so that 
$$\left(\li{u}((4q)^{-1}+\delta/2,0)-\li{u}((4q)^{-1},0)\right)_2= m\cos(\pi q\delta + \pi/2) \geq m\pi q\delta/2 = n,$$
and similarly
$$\left(\li{u}((4q)^{-1}-\delta/2,0)-\li{u}((4q)^{-1},0)\right)_2 = -n.$$
Thus $\li{u}(J_\delta)$ is an arc that transverses vertically the rectangle $[-\delta/2,\delta/2]\times [-n,n]$. 

Let $L= \{0\}\times [-n,n]$. Note that $\li{v}(L)$ is $1/q$-dense in $[-n,n]\times [-n,n]$, since every rectangle of the form $[-n,n]\times [-n+k/q, -n+(k+1)/q]$, $0\leq k \leq 2qn-1$ is horizontally transversed by $\li{v}(L)$. By the previous paragraph, $\li{u}(J_\delta)$ contains a point of the form $(s, y)$ with $\abs{s}<\delta/2$ for each $(0,y)\in L$. Since $\li{v}(s,y) =  \li{v}(0,y)+(s,0)$, it follows from the previous facts that, if $m$ is so large that $\delta/2<1/q$, $\li{h}(J_\delta)= \li{v}(\li{u}(J_\delta))$ is $2/q$-dense in $[-n,n]\times [-n,n]$ (see Figure 1). Since we assumed earlier that $q\geq 2n$, we conclude that $h(J_\delta)$ is $1/n$-dense in $[-n,n]\times [-n,n]$ as claimed. This proves the claim.
\end{proof}

Let $B\subset \R^2$ be a ball of radius $1/n$. Then $B$ contains a ball $B'$ of radius $1/(2n)$ around some point of coordinates $(i/q, j/q)$, with $i,j$ integers (because $q\geq 2n$). Since $\li{h}$ commutes with $R_{(1/q,0)}$, and using Claim 1, we see that $$\li{h}(I_\delta +(i/q,j/q)-(n,m)) = \li{h}(I_\delta) -(n,m) + (i/q,j/q) \subset B'$$
In particular, $I_\delta +(i/q,j/q)-(n,m) \subset \li{h}^{-1}(B)$. Since $J_\delta$ lies on the same horizontal line as $I_\delta$ and is shorter than $I_\delta$, if $\alpha$ is an irrational number we can find $k\in \N$ and $r\in \Z$ such that 
$J_\delta+(r,0)\subset \li{R}^k_{(\alpha,0)}(I_\delta)$, and we have
$$J_\delta+(i/q,j/q)-(n,m)+(r,0) \subset \li{R}^k_{(\alpha,0)}(I_\delta+(i/q,j/q)-(n,m)).$$ 
Thus, if $\li{f} = \li{h}\li{R}_{(\alpha,0)}\li{h}^{-1}$,
\begin{multline*}
\li{f}^k(B) = \li{h}\li{R}^k_{(\alpha,0)}\li{h}^{-1}(B) \supset \li{h}\li{R}^k_{(\alpha,0)}(I_\delta +(i/q,j/q)-(n,m))\\ \supset \li{h}(J_\delta+(i/q,j/q)-(n+r,m)) = \li{h}(J_\delta)+(i/q,j/q)-(n+r,m)
\end{multline*}
which is just a translation of $\li{h}(J_\delta)$, and thus by Claim 1 it is $1/n$-dense in some ball of radius $n$. That is, $\li{f}^k(B)$ is $1/n$-dense in some ball of radius $n$, which means that $hR_{(\alpha,0)}h^{-1}\in P_n$. Since $\alpha$ was an arbitrary irrational number, this completes the proof.
\qed

\section{Invariant foliations}

Corollary \ref{corollary:main} is a direct consequence of Theorem \ref{theorem:main} and the next two propositions.

\begin{proposition} \label{pro:folia} If $\mathcal{F}$ is a foliation of $\T^2$ and $\li{\mathcal{F}}$ is the lift of $\mathcal{F}$ to $\R^2$, then there is a leaf $F\in \li{\mathcal{F}}$ which is contained in a strip bounded by two parallel straight lines $L$ and $L'$, such that both lines belong to different connected components of  $\R^2-F$.
\end{proposition}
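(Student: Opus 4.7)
My plan is to perform a case analysis according to whether $\mathcal{F}$ has a compact leaf. In either case the goal is to find a leaf $F \in \li{\mathcal{F}}$ whose image in $\R^2$ lies in a closed strip bounded by two parallel lines $L, L'$; the separation property---that $L$ and $L'$ lie in different components of $\R^2 \setminus F$---will then follow from a cylinder-quotient argument using the $v$-invariance of $F$ and the proper embeddedness of $F$.

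\textbf{Case 1.} Suppose $\mathcal{F}$ has a compact leaf $F_0 \subset \T^2$. I would first argue that $F_0$ is essential: otherwise $F_0$ would bound an embedded disk $D \subset \T^2$, and $\mathcal{F}|_D$ would be a non-singular foliation tangent to $\partial D$, contradicting Poincar\'e--Hopf since $\chi(D) = 1 \neq 0$. Hence $F_0$ represents a primitive class $v \in H_1(\T^2;\Z) = \Z^2$, and any lift $F$ of $F_0$ to $\R^2$ is a properly embedded topological line whose $\Z^2$-stabilizer equals $\Z v$. Passing to the cylinder $\R^2/\Z v$, the image of $F$ is a compact essential simple closed curve, so its orthogonal projection onto $\R v^\perp$ is a compact interval $[a,b]$. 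Taking $L$ and $L'$ to be the lines parallel to $v$ at $v^\perp$-heights $a$ and $b$, $F$ lies in the closed strip they bound. For the separation, observe that the image of $F$ in $\R^2/\Z v$ divides the cylinder into two half-cylinders, one extending to $+\infty$ along $v^\perp$ and the other to $-\infty$; lifting back, the corresponding open components of $\R^2 \setminus F$ contain $L'$ and $L$ respectively.

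\textbf{Case 2.} Suppose $\mathcal{F}$ has no compact leaves. This is the main obstacle. Here I would invoke the classical structure theory of non-singular foliations on $\T^2$: any such foliation admits a closed transversal $\tau \subset \T^2$, and the first-return map $h\colon \tau \to \tau$ is a circle homeomorphism whose rotation number must be irrational (otherwise $h$ would have a periodic orbit and $\mathcal{F}$ would have a compact leaf). By Poincar\'e's theorem, $h$ is semiconjugate to an irrational rotation, which yields a well-defined asymptotic direction $v \in \R^2 \setminus \{0\}$ for $\mathcal{F}$. I would then use the semiconjugacy, together with the fact that distinct lifts of a given leaf are pairwise disjoint $\Z^2$-translates of one another, to conclude that any lifted leaf $F \in \li{\mathcal{F}}$ stays at bounded distance from a line parallel to $v$; this forces $F$ into a strip, and the same type of cylinder-quotient separation argument as in Case~1 applies. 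The principal subtlety is to verify that the chosen lifted leaf is properly embedded in $\R^2$ even when $\mathcal{F}$ admits a Denjoy-type exceptional minimal set; this is handled using the local product structure of $\li{\mathcal{F}}$ together with its $\Z^2$-equivariance.
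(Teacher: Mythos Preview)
Your approach matches the paper's: both split into the compact-leaf and no-compact-leaf cases, and in the latter invoke the suspension/first-return structure over a circle homeomorphism with irrational rotation number. The paper makes Case~2 explicit by citing the equivalence with a suspension foliation, using the deviation bound $|\hat f^{\,n}(y)-y-n\rho|\le 1$ together with a uniform arc-length bound between consecutive transversal crossings to put each suspended leaf in a strip, and then pushing this strip forward by the equivalence homeomorphism, whose lift has the form $Az+\psi(z)$ with $A\in\GL(2,\Z)$ and $\psi$ bounded.

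One step in your outline does not go through as written. Your separation argument rests on ``the $v$-invariance of $F$'' and a quotient to the cylinder $\R^2/\Z v$. This is fine in Case~1, where $v\in\Z^2$ and $F+v=F$. In Case~2, however, no lifted leaf is invariant under any nonzero integer translation (that is precisely the no-compact-leaf hypothesis), and the asymptotic direction $v$ is typically irrational, so there is no cylinder quotient in which $F$ becomes a compact curve. The separation in Case~2 must be argued differently: once $F$ is known to be a properly embedded line contained in the strip, one checks that its two ends go to opposite ends of the strip (immediate in the suspension picture, since the leaf meets each vertical line $\{n\}\times\R$ exactly once), and then Jordan separation places $L$ and $L'$ in distinct components of $\R^2\setminus F$. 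The paper also glosses over this last point, but your explicit appeal to $v$-invariance makes the gap visible.
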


\begin{proof}
If $\mathcal{F}$ has a compact leaf, there is $z\in \R^2$ and a leaf $F$ of $\li{\mathcal{F}}$ such that $F+(p,q)=F$, for some pair of integers $(p,q)\neq (0,0)$. Thus, assuming $p\neq 0$, if $L_0$ is a line of slope $q/p$, it holds that $s = \sup\{d(z,L_0): z\in F\}<\infty$, and the proposition follows by choosing $L$ and $L'$ a distance greater than $s$ apart from $L_0$, one on each side. If $p=0$, then $q\neq 0$ an analogous argument holds.

Now suppose $\mathcal{F}$ has no compact leaves. By \cite[Theorem 4.3.3]{hector-hirsch}, $\mathcal{F}$ is equivalent to a foliation $\mathcal{F}'$ obtained by suspension of the trivial foliation $\R\times \T^1$ over an orientation preserving circle homeomorphism $f\colon \T^1\to \T^1$ with irrational rotation number. Such a foliation has a lift $\li{\mathcal{F}}'$ to $\R^2$ such that the intersection of the leaf through $(0,y)$ with the line $\{n\}\times \R$ is at $(n,\li{f}^n(y))$, where $\li{f}\colon \R\to \R$ is a lift of $f$. If $\phi(y)$ denotes the length of the arc of leaf joining $(0,y)$ to $(1,\li{f}(y))$, then $\phi\colon \R \to \R$ is a continuous function and it is $\Z$-periodic, because $\li{\mathcal{F}}'$ is a lift of a foliation of $\T^2$. Thus there is a constant $C$ such that $\phi(x)<C$ for all $x\in \R$. Note that the length of the arc joining $(n,y)$ to $(n+1,\li{f}(y))$ is also bounded by $C$. 

If $\rho$ is the rotation number of $\li{f}$, by classic results for circle homeomorphisms (see, for example, \cite{wellington}) we have $|\li{f}^n(y)-y - n\rho| \leq 1$ for all $n\in \Z$ and $y\in \R$. 
Let $F'$ be a leaf of $\li{\mathcal{F}}'$ containing the point $(0,y)$. Then $F'= \cup_{n\in \Z} F_n'$ where $F_n$ is the arc joining $(n,\li{f}^n(y))$ to $(n+1, \li{f}^{n+1}(y))$. Note that the distance from $(n,\li{f}^n(y))$ to the line $L_0$ of slope $\rho$ through $(0,y)$ is at most $1$, and the length of $F_n'$ is at most $C$. Thus the distance from any point of $F'$ to $L_0$ is at most $C+1$. 

We know that $\mathcal{F}$ is equivalent to $\mathcal{F}'$, which means there is a homeomorphism $h\colon \T^2\to \T^2$ mapping leaves of $\mathcal{F}'$ to leaves of $\mathcal{F}$. If $\li{h}\colon \R^2\to \R^2$ is a lift of $h$, then we can write $\li{h}(z)= A(z) + \psi(z)$ where $A\in \GL(2,\Z)$ and $\psi$ is a $\Z^2$-periodic function, bounded by some constant $K$. If $L_1 = AL_0$, $z = \li{h}(z')$ is a point in $F=h(F')$, and $w=A(w')$ is a point in $L_1$ then 
$$\abs{z-w} = \abs{A(z'-w')+\psi(z)}\leq \norm{A}\cdot\abs{z'-w'} + K \leq \norm{A}(C+1)+K,$$
the last inequality following from the fact that $z'\in F'$ and $w'\in L_0$.
It follows that $s = \sup_{z\in F} d(z,L_1)<\infty$, and as before we complete the proof choosing $L$ and $L'$ parallel to $L_1$ and a distance at least $s$ apart from $L_1$, one on each side.
\end{proof}

\begin{proposition}\label{pro:foliadev} If $f$ is weak spreading and homotopic to the identity, then $f$ has no invariant topological foliations.
\end{proposition}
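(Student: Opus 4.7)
The plan is a proof by contradiction: suppose $f$ is weak spreading and preserves a topological foliation $\mathcal{F}$; then any lift $\li{f}$ preserves the lifted foliation $\li{\mathcal{F}}$. The goal is to produce an open set $U$ whose iterates $\li{f}^n(U)$ are uniformly trapped in a strip of fixed direction, contradicting weak spreading. The first step is to strengthen Proposition~\ref{pro:folia} to a uniform statement: there exist $v \in S^1$ and $K > 0$ such that \emph{every} leaf of $\li{\mathcal{F}}$ lies in a strip of width $K$ parallel to $v$. The proof of Proposition~\ref{pro:folia} essentially yields this --- in the no-compact-leaves case the bound $C+1$ is uniform across all leaves, and in the compact-leaf case all compact leaves share a homology class while any non-compact leaves (appearing inside Reeb-type components) are squeezed between two compact boundary leaves.

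The key construction: let $v^\perp$ be a unit vector perpendicular to $v$ and $\pi_\perp \colon \R^2 \to \R$ the orthogonal projection onto $\R v^\perp$. Pick $w \in \Z^2$ with $\pi_\perp(w) > K$. Then the disjoint leaves $F_0$ (from Proposition~\ref{pro:folia}) and $F_0+w$ lie in disjoint parallel strips. Let $R$ be the connected component of $\R^2 \setminus (F_0 \cup (F_0+w))$ whose closure contains both leaves --- the ``between'' region --- and note that $\diam(\pi_\perp(R)) \leq K + \pi_\perp(w) =: M$. Pick a small open ball $U \subset R$. Since $\li{f}$ commutes with integer translations we have $\li{f}^n(F_0+w) = \li{f}^n(F_0)+w$, and since $\li{f}^n$ is a homeomorphism, $\overline{\li{f}^n(R)} = \li{f}^n(\overline{R})$ contains both image leaves, so $\li{f}^n(R)$ is the ``between'' component of $\R^2 \setminus (\li{f}^n(F_0) \cup (\li{f}^n(F_0)+w))$. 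These two image leaves again lie in strips of width $K$ offset by $w$, so $\diam(\pi_\perp(\li{f}^n(U))) \leq \diam(\pi_\perp(\li{f}^n(R))) \leq M$ for every $n$.

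Finally, apply weak spreading with small $\epsilon$ and $N > M/2 + \epsilon$: there is $n$ such that $\li{f}^n(U)$ is $\epsilon$-dense in some ball $B(z_n, N)$ of radius $N$. Projecting via $\pi_\perp$, the interval $\pi_\perp(B(z_n, N))$ of length $2N$ must lie in the $\epsilon$-neighborhood of $\pi_\perp(\li{f}^n(U))$, a set of diameter at most $M$; this forces $2N \leq M + 2\epsilon$, contradicting the choice of $N$. The main technical obstacle is establishing the uniform strip-width statement for \emph{all} leaves of $\li{\mathcal{F}}$, which requires the classification of codimension-$1$ topological foliations of $\T^2$; the secondary point --- that $\li{f}^n(R)$ is again a ``between'' component --- is a direct consequence of the topological characterization of that component as the unique one whose closure meets both boundary leaves.
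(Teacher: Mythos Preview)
Your argument is correct and ultimately follows the same contradiction scheme as the paper: trap $\li{f}^n$ of a region between two $\Z^2$-translates of a leaf and bound its transverse width uniformly in $n$. The difference is in how the uniform width bound is obtained. You invest effort in strengthening Proposition~\ref{pro:folia} to a \emph{uniform} strip bound for \emph{all} leaves, via the classification of $C^0$ foliations of $\T^2$ (suspension case, compact-leaf case with Reeb components). The paper avoids this entirely: it takes only the single leaf $\li{F}_0$ supplied by Proposition~\ref{pro:folia}, observes that the translates $\li{F}_0+(0,k)$, $k\in\Z$, partition $\R^2$ into strips $S+(0,k)$ of fixed $u$-width $M$, and then uses only the fact that distinct leaves are disjoint to conclude that $\li{f}^n(\li{F}_0)$ (being a leaf) lies in one such strip $S+(0,m)$. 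This immediately gives $\width_u(\li{f}^n(S))\leq 2M$ with no further foliation theory. In effect, the ``ladder'' of translates of the one good leaf already forces the uniform bound you work to establish.

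Your route is sound but heavier, and your sketch of the compact-leaf case is incomplete as written: you assert that all compact leaves share a homology class (true) but do not actually argue a uniform width bound for their lifts, and you implicitly use that $\li{f}^n(F_0)$ separates the two sides of its strip when bounding $\pi_\perp(\li{f}^n(R))$. Both points can be filled in, but the paper's ladder argument sidesteps them altogether. The payoff of your formulation is a slightly more explicit geometric picture (the ``between'' component and its topological characterization), at the cost of revisiting the foliation classification.
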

\begin{proof}
By Proposition \ref{pro:folia}, if $\mathcal{F}$ is a foliation invariant by $f$ and $\li{\mathcal{F}}$ is the lift of this foliation to $\R^2$ (hence invariant by $\li{f}$), there is a leaf $\li{F}_0\in \li{\mathcal{F}}$ which is contained in a strip bounded by two parallel lines $L$ and $L'$, and which contains each of those lines in a different component of its complement.
Let $u$ be a unit vector orthogonal to $L$. We will assume without loss of generality that $u$ has a nonzero second coordinate. If $S$ is the strip bounded by $\li{F}_0$ and $\li{F}_0+(0,1)$, denoting by $\phi_u\colon\R^2\to\R$ the orthogonal projection onto the direction of $u$, it is clear that $$\width_u(S)\doteq \diam(\phi_u(S))<\infty.$$
Moreover, $\cup_{n\in\Z} S+(0,n) = \R^2$.

For each $n\in \Z$, since $\li{f}^n(\li{F}_0)$ cannot cross $\li{F}_0+(0,k)$ for any $k\in \Z$, we see that $\li{f}^n(\li{F}_0) \subset S+(0,m)$ for some $m\in \Z$. This implies that $\width_u({f}^n(\li{F}_0))\leq M = \width_u(S).$ But then 
$$\li{f}^n(\li{F}_0+(0,1)) \subset S+(0,m+1),$$
so that $\li{f}^n(S)$ is contained in the strip bounded by $\li{F}_0+(0,m)$ and $\li{F}_0+(0,m+2)$. This means that $\width_u(\li{f}^n(S))\leq 2M$. However, if $f$ is weak spreading, then there is $n>0$ such that $\li{f}^n(S)$ is $1/3$-dense in some ball of radius $3M$, so that $\width_u(\li{f}^n(S))> 2M$, contradicting the previous claim. This completes the proof.
\end{proof}

\section{The real analytic case}
\label{sec:analytic}

In this section we briefly explain how to obtain minimal weak spreading analytic diffeomorphisms of $\T^2$. We kindly thank the anonymous referee for bringing this to our attention.

First we introduce some notation, following \cite{fayad-sap}. Fix $\rho>0$, and let $g:\R^2\to \R^2$ be any real analytic $\Z^2$-periodic function which can be holomorphically extended to $A_\rho=\{(z,w)\in \C^2 : \abs{\Im{z}}<\rho,\, \abs{\Im{w}}<\rho\}$. We define $\norm{g}_\rho=\sup_{A_\rho}\abs{g(z,w)}$, and we denote by $C^\omega_\rho(\T^2)$ the space of all functions of this kind which satisfy $\norm{g}_\rho<\infty$.

Let $\diff_\rho^\omega(\T^2)$ be the space of all diffeomorphisms $f$ of $\T^2$ which are homotopic to the identity, and which have a lift whose periodic part is in $C_\rho^\omega(\T^2)$. There is a metric in $\diff_\rho^\omega(\T^2)$ defined by
$$d_\rho(h,k)=\inf_{(p,q)\in \Z^2} \norm{\li{h}-\li{k}+(p,q)}_\rho,$$
where $\li{h}$ and $\li{k}$ are lifts of $h$ and $k$, respectively.
Since $C_\rho^\omega(\T^2)$ is a Banach space, it is easy to see that the metric $d_\rho$ turns $\diff^\omega_\rho(\T^2)$ into a complete metric space. 

To apply the arguments of the previous sections we work in the space
$\ol{\mathcal{O}}_\rho^\omega(\T^2)$ defined as the closure in the $d_\rho$ metric of the set of diffeomorphisms of the form $hR_\alpha h^{-1}$ where $\alpha\in \T^1$, and $h\in \diff_\rho^\omega(\T^2)$ is any diffeomorphism whose lifts to $\R^2$ have a bi-holomorphic extension to $\C^2$.

We observe that the proof of Lemma \ref{lem:fh} applies to this setting if we use $\ol{\mathcal{O}}_\rho^\omega(\T^2)$ instead of $\fhc(\T^2)$ (and the topology induced by $d_\rho$ instead of the $C^\infty$ topology). 

To complete the proof of Theorem \ref{theorem:anal}, we note that everything in \S3 works without any modifications, because the function $h$ constructed to obtain property (3) of Lemma \ref{lem:fh} has an analytic extension to all of $\C^2$ which is a bi-holomorphism.

\bibliographystyle{amsalpha} 
\bibliography{tesis}

\end{document}